\theoremstyle{plain}
\newtheorem{theorem}{Theorem}
\newtheorem{corollary}[theorem]{Corollary}
\newtheorem{lemma}[theorem]{Lemma}
\theoremstyle{remark}
\theoremstyle{definition}
\newtheorem{remark}[theorem]{Remark}
\newcommand{\Z}{\mathbb{Z}}
\newcommand{\F}{\mathbb{F}}
\newcommand{\R}{\mathbb{R}}
\newcommand{\Mtil}{\widetilde{M}}
\newcommand{\Gtil}{\widetilde{G}}
\newcommand{\xtil}{\widetilde{x}}
\newcommand{\ytil}{\widetilde{y}}
\newcommand{\ztil}{\widetilde{z}}
\newcommand{\eps}{\varepsilon}
\DeclareMathOperator{\vol}{vol}
\title{ A curvature-free Log(2k-1) Theorem}
\author{Florent Balacheff and Louis Merlin}
\address{F. Balacheff, Universitat Aut\`onoma de Barcelona, Spain.}
\email{fbalacheff@mat.uab.cat}
\address{L. Merlin, Rheinisch-Westfälische Technische Hochschule (RWTH), Aachen, Germany}
\email{louis.merlin@hotmail.fr}
\thanks{The first author acknowledges support by the European Social Fund and the Agencia Estatal de Investigaci\'on through the Ram\'on y Cajal grant RYC-2016-19334  "Local and global systolic geometry and topology" and the FEDER/MICINN grant PGC2018-095998-B-I00 . The second author was partially supported by UL IRP grant NeoGeo and FNR grants INTER/ANR/15/11211745 and OPEN/16/11405402 and also acknowledges support from U.S. National Science Foundation grants DMS-1107452, 1107263, 1107367 ``RNMS: GEometric structures And Representation varieties'' (the GEAR Network).}
\subjclass[2010]{30F40, 53C23}
\begin{document}

\maketitle

\begin{abstract}
This paper presents a curvature-free version of the $\text{Log}(2k-1)$ Theorem of Anderson, Canary, Culler \& Shalen \cite{ACCS}.  It generalizes a result by Hou \cite{Hou01} and its proof is rather straightforward once we know the work by Lim \cite{Lim08} on volume entropy for graphs. As a byproduct we obtain a curvature-free version of the Collar Lemma in all dimensions.
\end{abstract}




\bigskip

\bigskip

The celebrated $\text{Log}(2k-1)$ Theorem by Anderson, Canary, Culler \& Shalen  states that if $\Gamma$ is a Kleinian group freely generated by elements $\gamma_1,\ldots,\gamma_k$ then for any $\xtil \in \mathds{H}^3$
$$
\sum_{i=1}^k \frac{1}{1+e^{d(\xtil,\gamma_i\cdot \xtil)}}\leqslant\frac{1}{2}.
$$
In particular there exists some $i \in \{1,\ldots,k\}$ such that $d(\xtil,\gamma_i\cdot \xtil)\geq \log(2k-1)$.
Strictly speaking, they initially proved their result in  \cite{ACCS} under additional tameness and hyperbolicity assumptions which can be removed due to later advances in Kleinian groups theory, see \cite[Theorem 7.7]{Can08}. This result can be viewed as a quantitative generalization of the Margulis lemma and has been extended in \cite[Theorem 1.1]{Hou01} to complete Riemannian manifolds of negative pinched curvature with a term involving the critical exponent of the group. In this article we drop off the curvature assumption in Hou's result by showing the following curvature-free inequality.

 \begin{theorem}\label{thm:main}
Let $\Mtil$ be a simply connected and complete Riemannian manifold. If $\Gamma$ is a discrete subgroup of isometries of $\Mtil$ freely generated by elements $\gamma_1,\ldots,\gamma_k$ then for any $\xtil \in \Mtil$
$$
\sum_{i=1}^k \frac{1}{1+e^{\delta(\Gamma) \cdot d(\xtil,\gamma_i\cdot \xtil)}}\leqslant\frac{1}{2}
$$
where $\delta(\Gamma)$ denotes the critical exponent of $\Gamma$.
\end{theorem}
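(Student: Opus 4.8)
The plan is to reduce the statement to Lim's computation of the volume entropy of a finite metric graph. Since $\Gamma$ is freely generated by $\gamma_1,\dots,\gamma_k$ it is a free group of rank $k$, hence torsion-free; combined with discreteness (the stabilizer of $\xtil$ is a discrete subgroup of a compact group, so finite) this forces $\ell_i := d(\xtil,\gamma_i\cdot\xtil) > 0$ for every $i$. I would then attach to this data the metric rose $R$: a wedge of $k$ loops based at a single vertex, the $i$-th loop carrying length $\ell_i$. Its universal cover $\widetilde R$ is the Cayley graph of $\Gamma$ with respect to $\gamma_1,\dots,\gamma_k$, namely the $2k$-regular tree, in which the edge labelled $\gamma_i^{\pm1}$ has length $\ell_i$; the orbit map $\gamma \mapsto \gamma\cdot\xtil$ extends to a map $\widetilde R \to \Mtil$ sending each edge isometrically onto a minimizing geodesic.

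The first key step is to identify the volume entropy $h$ of $R$. Applying Lim's formula \cite{Lim08} to the non-backtracking edge-transition operator of $R$, weighted by $w_i = e^{-h\ell_i}$, the entropy is the value of $h$ for which the Perron eigenvalue equals $1$. Exploiting the symmetry of the rose (the Perron eigenvector is constant on the two orientations of each loop) this eigenvalue equation collapses to $1 = 2\sum_{i=1}^k \frac{w_i}{1+w_i}$, that is $\sum_{i=1}^k \frac{1}{1+e^{h\ell_i}} = \frac12$. Thus $h$ is precisely the number that would appear in the desired inequality were we allowed to replace $\delta(\Gamma)$ by $h$.

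The second key step is the comparison $\delta(\Gamma) \geq h$. For a reduced word $\gamma = \gamma_{i_1}^{\eps_1}\cdots\gamma_{i_n}^{\eps_n}$ the triangle inequality along the broken geodesic through the points $\xtil,\ \gamma_{i_1}^{\eps_1}\cdot\xtil,\ \dots,\ \gamma\cdot\xtil$ gives $d(\xtil,\gamma\cdot\xtil) \leq \ell_{i_1}+\cdots+\ell_{i_n}$, which is exactly the distance from the root to the vertex $\gamma$ in $\widetilde R$. Hence, comparing Poincar\'e series term by term, $\sum_{\gamma\in\Gamma} e^{-s\, d(\xtil,\gamma\cdot\xtil)} \geq \sum_{\gamma\in\Gamma} e^{-s\,\dist_{\widetilde R}(\mathrm{root},\gamma)}$ for every $s>0$. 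The right-hand series is the orbit-counting series of the tree $\widetilde R$, whose exponent of convergence is the volume entropy $h$; therefore the left-hand series diverges for every $s<h$, forcing $\delta(\Gamma)\ge h$.

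Finally, since the function $t\mapsto \sum_{i=1}^k \frac{1}{1+e^{t\ell_i}}$ is strictly decreasing on $[0,\infty)$ (each $\ell_i>0$) and equals $\frac12$ at $t=h$, the inequality $\delta(\Gamma)\ge h$ yields $\sum_{i=1}^k \frac{1}{1+e^{\delta(\Gamma)\ell_i}} \leq \frac12$, which is the claim. The main obstacle I anticipate is the careful invocation of Lim's theorem: one must match the combinatorial data of $R$ to the hypotheses of the volume-entropy formula and verify that the exponent of convergence of the tree's Poincar\'e series (a count of group elements) genuinely coincides with the volume entropy, rather than merely being comparable to it — this uses that the edge lengths $\ell_i$ are bounded below by a positive constant. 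The remaining inputs, namely the positivity of the $\ell_i$, the triangle-inequality comparison, and the monotonicity, are routine.
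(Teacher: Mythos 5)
Your proposal is correct and follows essentially the same route as the paper: reduce to the wedge of $k$ circles of lengths $\ell_i$ whose universal cover is the $2k$-valent tree through the orbit of $\xtil$, invoke Lim's theorem to identify the graph's volume entropy $h$ as the solution of $\sum_{i=1}^k 1/(1+e^{h\ell_i})=1/2$, compare orbit counts via the triangle inequality to get $\delta(\Gamma)\geqslant h$, and conclude by monotonicity. The only cosmetic difference is that you phrase Lim's characterization through the non-backtracking transition operator and its Perron eigenvalue, whereas the paper quotes it as a linear system with a positive solution and derives the equation by summing and subtracting the equations, which amounts to the same computation.
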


Recall that the critical exponent of $\Gamma$ is defined as the unique real number $\delta(\Gamma)$ such that the Poincar\'e series
$$
\sum_{\gamma \in \Gamma}\exp(-sd(\xtil,\gamma \cdot \xtil))
$$
converges for $s>\delta(\Gamma)$ and diverges for  $s<\delta(\Gamma)$. It does not depend on the chosen point $\xtil$, and by standard arguments on Dirichlet series
$$
\delta(\Gamma)=\lim_{R \to \infty} \frac{\log \, \#\{ \gamma \in \Gamma \mid d(\xtil,\gamma\cdot \xtil)\leqslant R\}}{R}.
$$

Theorem \ref{thm:main}  applies in particular for $\Mtil=\mathds{H}^3$ (just like Hou's result) and gives a stronger inequality than the  $\text{Log}(2k-1)$ Theorem when the critical exponent of the Kleinian group is less than $1$ (such examples do exist by \cite{Laz14}). But for Kleinian groups we only have the upper bound $\delta(\Gamma)\leq 2$ (more generally, the critical exponent of a discrete subgroup of isometries of $\mathds{H}^n$ is at most $n-1$) which provides a weaker inequality  than the original $\text{Log}(2k-1)$ Theorem.  \\

In the special case where $\Mtil$ is the universal cover of a closed Riemannian manifold $M$, it is classical (see \cite[Lemma 2.2]{Sab06}) that the critical exponent of the fundamental group $\pi_1 M$ identified with the deck transformation group coincides with a well known Riemannian invariant called {\it volume entropy} (or sometimes {\it asymptotic volume}) of $M$ and defined as the exponential growth rate of volume of balls in its universal cover through the following formula:
$$
h_{\vol}(M):=\lim_{R \to \infty} \frac{\log \, \vol B(\xtil,R)}{R}.
$$
Here $B(\xtil,R)$ denotes the metric ball of radius $R$ around $\xtil$ in $\Mtil$ and $\vol$ the Riemannian volume.
Note that for an element $\gamma \in \pi_1 M$ the distance $d(\xtil,\gamma\cdot \xtil)$ coincides with the length $\ell(c)$ of a shortest geodesic loop $c$ in the class $\gamma$ and based at $x$ (the projection of $\xtil$ on $M$ by the covering map). 
 As the critical exponent of a subgroup of $\pi_1M$ is bounded from above by the volume entropy of $M$, our main theorem implies the following.

\begin{corollary}\label{cor:1}
Let $M$ be a closed Riemannian manifold and $x \in M$.
Assume that there exists a family $c_1,\ldots,c_k$ of homotopically independent loops based at $x$.

Then the following inequality holds true:
$$
\sum_{i=1}^k \frac{1}{1+e^{h_{vol}(M) \cdot \ell(c_i)}}\leqslant\frac{1}{2}.
$$
\end{corollary}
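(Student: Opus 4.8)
The plan is to deduce the corollary directly from Theorem \ref{thm:main} by passing to the universal cover and then applying two elementary monotonicity observations. First I would let $\Mtil$ denote the universal Riemannian cover of $M$, which is simply connected and complete because $M$ is closed (compact, hence complete by Hopf--Rinow, and Riemannian covers of complete manifolds are complete), and identify $\pi_1 M$ with the deck transformation group acting by isometries on $\Mtil$. Since $M$ is closed this action is properly discontinuous, so $\pi_1 M$, and therefore any subgroup of it, is discrete in the isometry group of $\Mtil$. Each based loop $c_i$ represents an element $\gamma_i \in \pi_1 M$, and the hypothesis that the $c_i$ are homotopically independent means precisely that $\gamma_1,\ldots,\gamma_k$ freely generate a free subgroup $\Gamma := \langle \gamma_1,\ldots,\gamma_k\rangle$. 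Thus $\Gamma$ is a discrete subgroup of isometries of $\Mtil$ freely generated by $\gamma_1,\ldots,\gamma_k$, so Theorem \ref{thm:main} applies: fixing a lift $\xtil \in \Mtil$ of $x$, it gives
$$
\sum_{i=1}^k \frac{1}{1+e^{\delta(\Gamma)\cdot d(\xtil,\gamma_i\cdot\xtil)}} \leqslant \frac{1}{2}.
$$

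It then remains to replace $\delta(\Gamma)$ by $h_{\vol}(M)$ and $d(\xtil,\gamma_i\cdot\xtil)$ by $\ell(c_i)$ without increasing the left-hand side. For this I would invoke the two facts recalled in the discussion preceding the statement: first, the critical exponent is monotone under inclusion of subgroups, so $\delta(\Gamma)\leqslant\delta(\pi_1 M)=h_{\vol}(M)$; and second, $d(\xtil,\gamma_i\cdot\xtil)$ equals the length of a shortest geodesic loop in the class $\gamma_i$ based at $x$, so that $d(\xtil,\gamma_i\cdot\xtil)\leqslant\ell(c_i)$ since $c_i$ is merely one loop in that class. As all the quantities involved are nonnegative, multiplying these two inequalities yields, for each $i$,
$$
\delta(\Gamma)\cdot d(\xtil,\gamma_i\cdot\xtil) \leqslant h_{\vol}(M)\cdot \ell(c_i).
$$

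Finally, the function $u\mapsto \tfrac{1}{1+e^{u}}$ is decreasing on $[0,\infty)$, so the preceding inequality gives, term by term,
$$
\frac{1}{1+e^{h_{\vol}(M)\cdot \ell(c_i)}} \leqslant \frac{1}{1+e^{\delta(\Gamma)\cdot d(\xtil,\gamma_i\cdot\xtil)}}.
$$
Summing over $i$ and combining with the bound furnished by Theorem \ref{thm:main} produces the desired inequality. I do not expect any genuine obstacle here, since the full force of the statement is carried by the main theorem; the only points demanding care are the two comparison inequalities $\delta(\Gamma)\leqslant h_{\vol}(M)$ and $d(\xtil,\gamma_i\cdot\xtil)\leqslant\ell(c_i)$, both of which are standard and essentially recorded in the text, together with the correct direction of monotonicity of $u\mapsto\tfrac{1}{1+e^u}$.
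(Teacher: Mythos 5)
Your proof is correct and follows essentially the same route as the paper: the paper deduces the corollary from Theorem \ref{thm:main} using precisely the two facts you isolate, namely that the critical exponent of the subgroup generated by the classes of the $c_i$ is bounded above by $h_{\vol}(M)$ (via \cite[Lemma 2.2]{Sab06}) and that $d(\xtil,\gamma_i\cdot\xtil)$ is the length of a shortest based loop in the class $\gamma_i$, hence at most $\ell(c_i)$. Your write-up merely makes explicit the monotonicity of $u\mapsto 1/(1+e^u)$ and the discreteness of the deck group, which the paper leaves implicit.
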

 
 Remember that $k$ loops are said to be  {\it homotopically independent} if their homotopy classes generate a free subgroup of rank $k$.\\

Theorem \ref{thm:main} is optimal: for any $k\geqslant 2$ there exists a sequence of Riemannian metrics $(g_n)$ on the closed manifold $X$ formed by taking the connected sum of $k$ copies of $S^1 \# S^2$, a point $x$ on $X$ and a family of homotopically independent loops $c_1,\ldots,c_k$ based at $x$ generating the fundamental group $\pi_1 X\simeq \F_k$ such that 
$$
\lim_{n \to \infty} \sum_{i=1}^k \frac{1}{1+e^{\ell_{g_n}(c_i) \cdot h_{vol}(X,g_n)}}= \frac{1}{2}.
$$
Here the volume entropy coincides with the critical exponent of the subgroup of isometries of the universal cover $\tilde{X}$ associated with the $c_i$'s homotopy classes.
See Remark \ref{rem:g} for more details.\\

As another consequence of Theorem \ref{thm:main} we get the following.

\begin{corollary}\label{cor:collarlemma}
Fix $h>0$.
Let $M$ be a complete Riemannian manifold with volume entropy $h$.
Suppose that $c_1$ and $c_2$ are two homotopically independent loops based at $x$.

Then
$$
\ell(c_2)\geqslant {1\over h} \log\left({4\over h\ell(c_1)}\right) +o(1)
$$
for $\ell(c_1)$ sufficiently close to $0$.
\end{corollary}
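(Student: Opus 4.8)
The plan is to reduce the statement to the case $k=2$ of our main results and then extract the asymptotic purely by elementary algebra, so I expect no serious obstacle beyond bookkeeping. First I would lift $x$ to a point $\xtil$ in the universal cover $\Mtil$ and let $\gamma_1,\gamma_2$ be the homotopy classes of $c_1,c_2$, which freely generate a rank-two subgroup $\Gamma$ of isometries of $\Mtil$ with $\delta(\Gamma)\leqslant h$. Since $d(\xtil,\gamma_i\cdot\xtil)=\ell(c_i)$ and the map $s\mapsto (1+e^{s\ell})^{-1}$ is decreasing for fixed $\ell>0$, applying Theorem \ref{thm:main} and replacing $\delta(\Gamma)$ by the larger value $h$ yields the key inequality
$$
\frac{1}{1+e^{h\ell(c_1)}}+\frac{1}{1+e^{h\ell(c_2)}}\leqslant\frac{1}{2},
$$
which is exactly Corollary \ref{cor:1} with $k=2$.

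Writing $a=\ell(c_1)$ and $b=\ell(c_2)$, I would isolate the term containing $b$ to get
$$
\frac{1}{1+e^{hb}}\leqslant\frac{1}{2}-\frac{1}{1+e^{ha}}=\frac{e^{ha}-1}{2(1+e^{ha})}.
$$
For $a>0$ the right-hand side is strictly positive, so I may invert both sides and solve for $e^{hb}$; a one-line computation then gives
$$
e^{hb}\geqslant\frac{e^{ha}+3}{e^{ha}-1}.
$$

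Finally I would run the asymptotics as $a\to 0$. Using $e^{ha}=1+ha+O((ha)^2)$, the numerator tends to $4$ while the denominator is $ha+O((ha)^2)$, so the right-hand side equals $\tfrac{4}{ha}\bigl(1+O(ha)\bigr)$. Taking logarithms, the multiplicative error contributes only $\log(1+O(ha))=O(ha)$, whence
$$
\ell(c_2)=b\geqslant\frac{1}{h}\log\left(\frac{4}{h\,\ell(c_1)}\right)+O\bigl(\ell(c_1)\bigr),
$$
and since $O(\ell(c_1))=o(1)$ as $\ell(c_1)\to 0$ this is the claimed bound. The only points demanding care are the sign condition $a>0$ needed to invert the inequality (automatic for a genuine loop) and checking that the error term survives as a genuine $o(1)$ after taking logarithms, which it does; everything else is routine.
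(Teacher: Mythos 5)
Your proposal is correct and follows exactly the route the paper intends: apply Theorem \ref{thm:main} with $k=2$, use $\delta(\Gamma)\leqslant h$ to pass to the volume entropy, isolate $e^{h\ell(c_2)}\geqslant (e^{h\ell(c_1)}+3)/(e^{h\ell(c_1)}-1)$, and expand as $\ell(c_1)\to 0$; your algebra and the $O(\ell(c_1))=o(1)$ error analysis are both sound. The paper leaves this derivation implicit (stating the corollary as a direct consequence of Theorem \ref{thm:main}), so your write-up simply fills in the same computation.
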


So if the volume entropy is kept fixed while the shortest length shrinks to zero, then the length of the largest loop blows up and we control the rate of explosion. 
It partially recovers (albeit with a worst multiplicative constant) and also generalizes to free curvature metrics the classical consequence of the Collar Lemma \cite[Corollary 4.1.2]{Bus92} that given a closed hyperbolic surface $S$ and two simple closed geodesics $c_1$ and $c_2$ intersecting each other, then the following sharp inequality is satisfied:
$$
\sinh\left(\frac{\ell(c_1)}{2}\right)\sinh \left(\frac{\ell(c_2)}{2}\right)> 1.
$$
Indeed this inequality admits the following expansion
$$
\ell(c_2)\geqslant 2 \log \left(\frac{4}{\ell(c_1)}\right)+o(1)
$$
for $\ell(c_1) \to 0$ while $h_{\vol}(S)=1$.

After showing Corollary \ref{cor:collarlemma}, we discovered that a curvature-free analog of the Collar Lemma has been independently proved in \cite[Lemma 7.12]{BCGS} where they obtained that 
$$
\ell(c_2)> {1\over h} \log\left({1\over h\ell(c_1)}\right)
$$
under the same assumptions.
Our corollary is slightly better than their result for small values of $\ell(c_1)$, but most importantly Theorem \ref{thm:main} relates it to a more general inequality. Compare also with \cite[Theorem 1.2]{Cer14}.
\\

For large families of homotopically independent loops we can also bound from below the length of the largest one in terms of the previous ones.

\begin{corollary}\label{cor:collarlemma2}
Fix $h>0$ and $k\geq 3$.
Let $M$ be a complete Riemannian manifold with volume entropy $h$.
Suppose that $c_1,\ldots,c_k$ are homotopically independent loops based at $x$ ordered by increasing length: $\ell(c_1)\leqslant \ldots \leqslant \ell(c_k)$.

Then
$$
\ell(c_k)\geqslant {-1\over h} \log\left( {h\ell(c_1)\over 4}-\sum_{i=2}^{k-1} e^{-h \ell(c_i)} \right)+o(1)
$$
for $\ell(c_1)$ sufficiently close to $0$.
\end{corollary}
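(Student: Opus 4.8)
The plan is to feed the inequality of Theorem~\ref{thm:main} into an elementary asymptotic analysis as $\ell(c_1)\to 0$. First I would apply Theorem~\ref{thm:main} to the discrete free subgroup $\Gamma$ of $\mathrm{Isom}(\Mtil)$ generated by the homotopy classes of $c_1,\dots,c_k$, using that $d(\xtil,\gamma_i\cdot\xtil)=\ell(c_i)$ and that $\delta(\Gamma)\le h$. Since $\delta\mapsto 1/(1+e^{\delta \ell})$ is nonincreasing, enlarging the exponent from $\delta(\Gamma)$ to $h$ only decreases each summand, so
$$
\sum_{i=1}^k \frac{1}{1+e^{h\ell(c_i)}}\le \frac12 .
$$
Isolating the longest loop gives
$$
\frac{1}{1+e^{h\ell(c_k)}}\le \frac12-\frac{1}{1+e^{h\ell(c_1)}}-\sum_{i=2}^{k-1}\frac{1}{1+e^{h\ell(c_i)}}=:R,
$$
and since the left-hand side is positive we automatically have $0<R<1/2$.

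Next I would expand $R$. The shortest-loop term is treated exactly as in Corollary~\ref{cor:collarlemma}: writing $\tfrac12-\tfrac{1}{1+e^{h\ell(c_1)}}=\tfrac12\tanh\!\big(\tfrac{h\ell(c_1)}2\big)$ gives $\tfrac{h\ell(c_1)}{4}+o(\ell(c_1))$. For $R$ to stay positive the middle terms must be small, which forces $\ell(c_2),\dots,\ell(c_{k-1})\to\infty$; in that regime $\frac{1}{1+e^{h\ell(c_i)}}=e^{-h\ell(c_i)}(1+o(1))$, so
$$
R=\frac{h\ell(c_1)}{4}-\sum_{i=2}^{k-1}e^{-h\ell(c_i)}+o(\ell(c_1)).
$$
Inverting $\frac{1}{1+e^{h\ell(c_k)}}\le R$ yields $e^{h\ell(c_k)}\ge \frac{1-R}{R}$, hence
$$
\ell(c_k)\ge \frac1h\log\frac{1-R}{R}=\frac{-1}{h}\log R+o(1),
$$
the last equality because $R\to0$ makes $\frac1h\log(1-R)=o(1)$. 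Replacing $R$ by $B:=\frac{h\ell(c_1)}{4}-\sum_{i=2}^{k-1}e^{-h\ell(c_i)}$ inside the logarithm then produces the claimed bound.

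The main obstacle is precisely this final replacement: one must verify that $\log R-\log B=o(1)$, i.e. that the error $R-B$ is negligible relative to $B$. A short computation shows $R-B=O(\ell(c_1)^2)$, the contributions being the cubic term of $\tanh$ and the quantity $\sum_{i=2}^{k-1} e^{-2h\ell(c_i)}\le\big(\sum_{i=2}^{k-1}e^{-h\ell(c_i)}\big)^2<(\tfrac{h\ell(c_1)}{4})^2$. Thus the step is harmless whenever $B$ dominates $\ell(c_1)^2$, in particular in the model case $k=2$ of Corollary~\ref{cor:collarlemma}, where no middle terms occur and $R=B(1+o(1))$. I expect the careful bookkeeping of these error terms, and making precise the regime in which the stated $o(1)$ is uniform, to be the only genuinely delicate point; everything else is a direct consequence of Theorem~\ref{thm:main}.
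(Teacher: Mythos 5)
Your derivation follows exactly the route the paper intends (the corollary is stated there without proof, as a direct consequence of Theorem~\ref{thm:main}): pass to the free subgroup generated by the classes of the $c_i$, use $\delta(\Gamma)\leq h$ and monotonicity to get $\sum_{i=1}^k\frac{1}{1+e^{h\ell(c_i)}}\leq\frac12$, isolate the term of $c_k$, expand the remaining terms, and invert. Steps 1--4 of your argument are correct, and you are also right that the whole weight of the proof falls on the final replacement of $R$ by $B:=\frac{h\ell(c_1)}{4}-\sum_{i=2}^{k-1}e^{-h\ell(c_i)}$ inside the logarithm. Where you go wrong is in the last sentence: this is not a bookkeeping issue that can be fixed by more careful error estimates. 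It is a genuine gap, and it cannot be closed, because in the regime $0<B\lesssim \ell(c_1)^2$ the claimed bound does not follow from Theorem~\ref{thm:main} at all.

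To see this concretely, take $k=3$, write $\ell_i=\ell(c_i)$, and consider lengths realizing \emph{equality} in Lemma~\ref{lemma:graph}, with $e^{-h\ell_2}=\frac{h\ell_1}{4}-e^{-1/\ell_1}$, so that $B=e^{-1/\ell_1}>0$. Using $\frac12-\frac{1}{1+e^{h\ell_1}}=\frac{h\ell_1}{4}-\frac{(h\ell_1)^3}{48}+O(\ell_1^5)$ and $\frac{1}{1+e^{h\ell_2}}=e^{-h\ell_2}-e^{-2h\ell_2}+O(e^{-3h\ell_2})$, one finds
$$
R \;=\; \frac12-\frac{1}{1+e^{h\ell_1}}-\frac{1}{1+e^{h\ell_2}} \;=\; \frac{h^2\ell_1^2}{16}\bigl(1+o(1)\bigr),
$$
so the equality case only forces $\ell_3=\frac{2}{h}\log\frac{4}{h\ell_1}+o(1)$, whereas the corollary demands $\ell_3\geq -\frac1h\log B+o(1)=\frac{1}{h\ell_1}+o(1)$, which is incomparably larger. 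Such configurations are (up to perturbations that can be taken much smaller than $\ell_1^2$, hence harmlessly) realizable with volume entropy exactly $h$ by the construction of Remark~\ref{rem:g} together with a rescaling, so this is not merely a failure of the method: the statement itself fails in the regime you flagged. Your own estimate $R-B=O(\ell_1^2)$ pinpoints the obstruction, since for $0<B\ll\ell_1^2$ one has $\log R-\log B\to+\infty$. What your argument does prove is the corollary under the additional hypothesis $\ell(c_1)^2=o(B)$ --- which covers the paper's illustrative application, where $\varepsilon$ is fixed and $\ell(c_1)\to0$ --- or, unconditionally, the weaker variant obtained by replacing each $e^{-h\ell(c_i)}$ by $\frac{1}{1+e^{h\ell(c_i)}}$ inside the logarithm, since then $\frac{1}{1+e^{h\ell_k}}\leq \frac{h\ell_1}{4}-\sum_{i=2}^{k-1}\frac{1}{1+e^{h\ell_i}}$ inverts directly. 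Either restriction or reformulation is needed; no amount of bookkeeping yields the corollary as stated.
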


To illustrate which information provides this inequality, observe that if the second length is $\eps$-close to the lower bound in Corollary \ref{cor:collarlemma}, then the third length blows up at a speed at least $- \log \eps /h$.\\

We now prove Theorem \ref{thm:main}.
Let $\Gamma$ be a discrete subgroup of isometries of a simply connected and complete Riemannian manifold $\Mtil$ freely generated by elements $\gamma_1,\ldots,\gamma_k$. Fix some $\xtil \in \Mtil$ and set $a_i:=d(\xtil,\gamma_i\cdot \xtil)$ for $i=1,\ldots,k$.\\

Consider the metric graph $\Gtil$ defined as follows. The vertices of $\Gtil$ are in one-to-one correspondence with points $\{\gamma \cdot \tilde{x} \mid \gamma \in \Gamma\}$, and two vertices labelled by $\ytil$ and $\ztil$ are connected through an edge of length $a_i$ if and only if  $\ztil =\gamma_i^{\pm 1} \cdot \ytil$. As the action of $\Gamma$ on $\Mtil$ is free, this graph is an infinite tree of valence $2k$ and is the universal cover of the metric graph denoted by $G_{a_1,\ldots,a_k}$ defined as the wedge product of $k$ circles of respective lengths $a_1,\ldots,a_k$.
It is then easy to check that
$$
\#\{\tilde{v} \in V(\tilde{G}) \mid d_{\tilde{G}}(\tilde{x},\tilde{v}) \leqslant R\} \leq \#\{\gamma \cdot \tilde{x} \mid d(\tilde{x},\gamma \cdot \tilde{x})\leqslant R\}.
$$
It implies that
$$
h_{vol}(G_{a_1,\ldots,a_k})\leqslant \delta(\Gamma),
$$
and the announced inequality
$$
\sum_{i=1}^k {1 \over 1+e^{\delta(\Gamma) \cdot a_i}}\leqslant{1\over 2}
$$
is then a straightforward consequence of the following result for graphs.

\begin{lemma}\label{lemma:graph}
The volume entropy $h:=h_{vol}(G_{a_1,\ldots,a_k})$ satisfies the following equality:
$$
\sum_{i=1}^k {1 \over 1+e^{ha_i}}={1\over 2}.
$$
\end{lemma}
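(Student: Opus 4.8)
The plan is to reduce the computation of $h:=h_{\vol}(G_{a_1,\dots,a_k})$ to a spectral problem for a finite \emph{transfer matrix} and then to solve the resulting eigenvalue equation explicitly. The universal cover $\Gtil$ is the $2k$-valent tree described above, whose vertices are indexed by reduced words in the free group on $\gamma_1,\dots,\gamma_k$; the distance in $\Gtil$ from $\xtil$ to the vertex labelled by a reduced word $w=s_{i_1}^{\pm1}\cdots s_{i_n}^{\pm1}$ equals $a_{i_1}+\cdots+a_{i_n}$. Since the edge lengths and the valence are bounded, the volume of $B(\xtil,R)$ and the number of vertices within distance $R$ grow at the same exponential rate, so $h$ equals the abscissa of convergence of the Poincar\'e-type series $\sum_{w}e^{-s\ell(w)}$ summed over reduced words, where $\ell(w)$ is the distance above.

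Next I would encode reduced words as non-backtracking paths and build the transfer matrix. Let $E=\{1,\dots,k,\bar1,\dots,\bar k\}$ be the $2k$ oriented edges, with involution $e\mapsto\bar e$ and type $|e|\in\{1,\dots,k\}$, and give $e$ the weight $e^{-s a_{|e|}}$. Define the $2k\times 2k$ matrix $T(s)$ by $T(s)_{e,e'}=e^{-sa_{|e'|}}$ when $e'\neq\bar e$ and $0$ otherwise, so that the weighted count of reduced paths of combinatorial length $n$ is governed by $T(s)^n$. Invoking Lim's description of the volume entropy of a metric graph, or directly observing that the series above behaves like $\sum_n \lambda(s)^n$ where $\lambda(s)$ is the spectral radius of the nonnegative irreducible matrix $T(s)$, the entropy $h$ is characterized as the unique value of $s$ for which $\lambda(s)=1$.

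It then remains to solve $T(h)v=v$. Writing the eigenvalue equation as $v_e=\sum_{e'\neq\bar e}e^{-sa_{|e'|}}v_{e'}$ and setting $S:=\sum_{e'\in E}e^{-sa_{|e'|}}v_{e'}$, I get $v_e=S-e^{-sa_{|e|}}v_{\bar e}$. The symmetric ansatz $v_e=v_{\bar e}=:w_{|e|}$ is consistent and yields $w_i=S/(1+e^{-sa_i})$; since the resulting vector is strictly positive it is the Perron eigenvector by Perron--Frobenius. Substituting back into the definition of $S$ gives
$$ S=2\sum_{i=1}^k e^{-sa_i}w_i=2S\sum_{i=1}^k\frac{e^{-sa_i}}{1+e^{-sa_i}},$$
and cancelling $S\neq0$ together with $\frac{e^{-sa_i}}{1+e^{-sa_i}}=\frac{1}{1+e^{sa_i}}$ produces $1=2\sum_{i=1}^k\frac1{1+e^{sa_i}}$ at $s=h$, which is exactly the claimed identity.

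The main obstacle is the second step: justifying that the (continuously defined) volume entropy coincides with the threshold $\lambda(s)=1$ of the combinatorial transfer matrix. This is precisely the content of Lim's work, so once it is quoted the remaining algebra is routine; one should still check that $T(s)$ is irreducible for $k\ge2$ so that Perron--Frobenius applies and the positive eigenvector above is the dominant one, the degenerate case $k=1$ giving $h=0$ and $\frac1{1+e^0}=\frac12$ directly.
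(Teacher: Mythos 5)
Your proof is correct, but it packages the key input differently from the paper, so a comparison is worth making. The paper's proof simply quotes Lim's Theorem~4, which for the wedge of circles $G_{a_1,\ldots,a_k}$ characterizes $h$ as the unique positive number for which the $k\times k$ system $x_i=e^{-ha_i}x_i+\sum_{j\neq i}2e^{-ha_j}x_j$ admits a positive solution; it then extracts the identity by subtracting pairs of equations (getting $(1+e^{-ha_i})x_i=(1+e^{-ha_j})x_j$) and summing all of them. You instead work with the $2k\times 2k$ non-backtracking transfer matrix $T(s)$ on oriented edges, characterize $h$ by the spectral condition $\lambda(h)=1$, and solve the eigenvector equation by a symmetric ansatz. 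These are two presentations of the same mechanism: your ansatz $v_e=v_{\bar e}=w_{|e|}$, written out type by type, is exactly Lim's system (the diagonal coefficient $1$ comes from the same-type continuation $e\to e$, the off-diagonal $2$ from the two orientations of each other type), and your substitution of $w_i=S/(1+e^{-sa_i})$ into $S=2\sum_i e^{-sa_i}w_i$ is algebraically equivalent to the paper's subtract-then-sum manipulation. What your route buys is self-containedness: granting standard Perron--Frobenius facts (irreducibility, indeed primitivity, of $T(s)$ for $k\geq 2$, strict monotonicity of $\lambda(s)$, and the comparison of vertex growth with volume growth), one can in principle bypass the specific statement of Lim's theorem, and your positive-eigenvector argument correctly pins down $s=h$; you also handle $k=1$, which the matrix argument cannot cover since $T(s)$ is then reducible. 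What the paper's route buys is brevity: Lim's Theorem~4 is tailored to exactly this graph, so no spectral setup is needed. The only place where your write-up defers rigor is the identification of the volume entropy with the threshold $\lambda(s)=1$; you correctly flag that this is precisely the content of Lim's work, and either quoting it or fleshing out your Poincar\'e-series sketch closes that gap.
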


\begin{proof}
According to \cite[Theorem 4]{Lim08} we know that $h$ is the only positive real number such that the following linear system of equations with unkowns $x_i$ 
$$
\left\{
\begin{array}{c}
x_1=x_1e^{-ha_1}+2x_2e^{-ha_2}+\ldots+2x_ke^{-ha_k}   \\
x_2=2x_1e^{-ha_1}+x_2e^{-ha_2}+\ldots+2x_ke^{-ha_k}\\
\vdots\\
x_k=2x_1e^{-ha_1}+2x_2e^{-ha_2}+\ldots+x_ke^{-ha_k}
\end{array}
\right.
$$
has a solution with $x_i>0$ for $i=1,\ldots,k$.
So take such a solution $(x_1,\ldots,x_k) \in (\R^\ast_+)^n$.
By summing all the equations we see that
$$
\sum_{i=1}^k x_i=\sum_{i=1}^k (2k-1)e^{-ha_i}x_i,
$$
and by substracting any two different lines $(L_i)$ and $(L_j)$ we get that
$$
(1+e^{-ha_i})x_i=(1+e^{-ha_j})x_j.
$$
So
$$
\sum_{i=1}^k (1- (2k-1)e^{-ha_i})x_i=0
$$ which implies that
$$ 
\sum_{i=1}^k \frac{1-(2k-1)e^{-ha_i}}{1+e^{-ha_i}}=0.
$$
We then easily derive the announced equality
$$
\sum_{i=1}^k {1 \over 1+e^{ha_i}}={1\over 2}.
$$
\end{proof}

\begin{remark}\label{rem:g}
Theorem \ref{thm:main} is optimal: for any $k\geqslant 2$, there exists a sequence of Riemannian metrics $(g_n)$ on the connected sum $X$ of $k$ copies of $S^1 \times S^2$, a point $x$ on $X$ and a family of homotopically independent loops $c_1,\ldots,c_k$ based at $x$ generating the fundamental group such that 
$$
\lim_{n \to \infty} \sum_{i=1}^k \frac{1}{1+e^{\ell_{g_n}(c_i) \cdot h_{vol}(X,g_n)}}= \frac{1}{2}.
$$
The construction of the sequence of metrics $(g_n)$ can be easily obtained by slightly modifying the simplicial Riemannian metric defined on the wedge product of $k$ copies of $S^1 \times S^2$ as follows. Fix $k$ positive real numbers $a_1,\ldots,a_k$ and for $i=1,\ldots,k$ consider on each copy $(S^1 \times S^2)_i$ the metric product $a_i^2 dt^2 \otimes ds$ where $dt^2$ denotes the standard Riemannian metric on $S^1$ of length $1$ and $ds$ the standard Riemannian metric on $S^2$ of area $4\pi$. Then the simplicial Riemannian metric $g$ induced on the wedge product $\vee_{i=1}^k (S^1 \times S^2)_i$ has the following property. If $x$ denotes the common point to all factors, any minimal (in its homotopical class) geodesic loop $\gamma$ based at $x$ decomposes as a unique concatenation $\alpha_1\star \ldots \star \alpha_N$ where each $\alpha_j$ is a minimal geodesic loop in some factor $(S^1 \times S^2)_{i_j}$ and whose class is the $p_j$-iterated for some $p_j \in \Z\setminus\{0\}$ of a generator of the fundamental group of this factor. It is thus straighforward to see that $\ell(\gamma)=\sum_{j=1}^N |p_j|\cdot a_{i_j}$ from which we deduce that the volume entropy of $(\vee_{i=1}^k (S^1 \times S^2)_i,g)$ is equal to the volume entropy of $G_{a_1,\ldots,a_n}$. Now observe that we can choose for each $i=1,\ldots,k$ as loop $c_i$ any minimal geodesic loop based at $x$ and contained in $(S^1 \times S^2)_i$ that corresponds to a generator of the fundamental group of this factor. In particular $\ell_{g}(c_i)=a_i$ for $i=1,\ldots,k$ so
$$
\sum_{i=1}^k \frac{1}{1+e^{\ell_{g}(c_i) \cdot h_{vol}(\vee_{i=1}^k (S^1 \times S^2)_i,g))}}= \frac{1}{2}.
$$
Finally we construct the sequence of metrics $(g_n)$ to smooth out the base point $x$. Indeed we see the metric $g$ as a singular metric on the connected sum $\#_{i=1}^k (S^1 \times S^2)_i$ and we approximate $g$ in the $C^0$-topology by smooth Riemannian metrics. The conclusion follows as both length and volume entropy are continuous maps for this topology.
\end{remark}

\medskip

\noindent    \textbf{Acknowledgments.} We would like to thank Yi Huang and the two referees for valuable comments.


\end{document}